\definecolor{webgreen}{rgb}{0,.5,0}
\definecolor{webbrown}{rgb}{.6,0,0}
\tikzset{circle node/.style = {circle,inner sep=1pt,draw, fill=white},
        X node/.style = {fill=white, inner sep=1pt},
        dot node/.style = {circle, draw, inner sep=5pt}
        }
\newtheorem{theorem}{Theorem}
\newtheorem{proposition}[theorem]{Proposition}
\theoremstyle{definition}
\newtheorem{example}[theorem]{Example}
\DeclareMathOperator{\Rev}{Rev}
\DeclareMathOperator{\INVERT}{INVERT}
\DeclareMathOperator{\MINVERT}{MINVERT}
\newcommand{\seqnum}[1]{\href{http://oeis.org/#1}{\underline{#1}}}
\begin{document}

\begin{center}
\vskip 1cm{\LARGE\bf Moment sequences, transformations, and Spidernet graphs} \vskip 1cm \large
Paul Barry\\
School of Science\\
South East Technological University\\
Ireland\\
\href{mailto:pbarry@wit.ie}{\tt pbarry@wit.ie}
\end{center}
\vskip .2 in

\begin{abstract} We use the link between Jacobi continued fractions and the generating functions of certain moment sequences to study some simple transformations on them. In particular, we define and study a transformation that is appropriate for the study of spidernet graphs and their moments, and the free Meixner law.
\end{abstract}

\section{Introduction}

In this section we study two well-known transforms on (integer) sequences, namely the binomial and the INVERT transform, and then we define the ``mean INVERT'' transform, which turns out to be significant in the context of this note.

We consider a sequence $a_n$ with generating function $g(x)=\sum_{n=0}^{\infty} a_n x^n$. The \emph{binomial transform} of $a_n$ is the sequence $b_n$ where
$$ b_n = \sum_{k=0}^n \binom{n}{k}a_k.$$ The sequence $b_n$ then has its generating function given by
$$ \frac{1}{1-x} g\left(\frac{x}{1-x}\right).$$
More generally, the $r$-th binomial transform ($B_r$) of $a_n$ is given by
$$\sum_{k=0}^n \binom{n}{k}r^{n-k} a_k.$$ This has its generating function given by
$$\frac{1}{1-rx} g\left(\frac{x}{1-rx}\right).$$
The INVERT transform of $a_n$ is the sequence whose generating function is given by $\frac{g(x)}{1-x g(x)}$. More generally, the $r-\INVERT$ transform ($\INVERT_r$) of $a_n$ is the sequence with generating function $\frac{g(x)}{1-rxg(x)}$. Relationships between these two transformations include the following.
\begin{proposition} We have
$$B_r \circ \INVERT_s = \INVERT_s \circ B_r.$$
\end{proposition}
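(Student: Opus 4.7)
The plan is to verify the identity at the level of generating functions by direct computation, using the two explicit formulas recalled in the excerpt. Let $g(x)=\sum_{n\ge 0} a_n x^n$, and introduce the shorthand $u=u(x)=\dfrac{x}{1-rx}$, so that $B_r$ sends $g(x)$ to $\dfrac{1}{1-rx}g(u)$.

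First I would compute $B_r(\INVERT_s(g))$. Applying $\INVERT_s$ to $g$ gives $\dfrac{g(x)}{1-sxg(x)}$; then feeding this through the $B_r$ formula substitutes $x\mapsto u$ and multiplies by $1/(1-rx)$, yielding
\[
\frac{1}{1-rx}\cdot\frac{g(u)}{1-su\,g(u)}
\;=\;\frac{g(u)}{(1-rx)-sx\,g(u)},
\]
where the last step uses $(1-rx)\cdot su=sx$.

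Next I would compute $\INVERT_s(B_r(g))$. Writing $h(x)=\dfrac{g(u)}{1-rx}$, the $\INVERT_s$ formula gives
\[
\frac{h(x)}{1-sx\,h(x)}
\;=\;\frac{\dfrac{g(u)}{1-rx}}{1-\dfrac{sx\,g(u)}{1-rx}}
\;=\;\frac{g(u)}{(1-rx)-sx\,g(u)},
\]
which is the same expression as above. Hence the two compositions agree as generating functions, and therefore as sequence transformations.

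There is no real obstacle here beyond bookkeeping; the only subtlety is the algebraic simplification $(1-rx)\cdot su=sx$ that makes the denominators in the two orders of composition collapse to the same rational function in $x$ and $g(u)$. One could also package this more conceptually by noting that both $B_r$ and $\INVERT_s$ admit a uniform description as Möbius-type actions on continued fractions, but for the present statement the direct verification above is the cleanest route.
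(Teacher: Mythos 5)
Your proof is correct and follows essentially the same route as the paper's: a direct computation of both compositions at the level of generating functions, showing they reduce to the same rational expression in $x$ and $g\!\left(\frac{x}{1-rx}\right)$. The only cosmetic difference is that you clear the factor $1-rx$ from both denominators to exhibit a common normal form, whereas the paper leaves the expression unsimplified and recognizes it directly as $\INVERT_s$ applied to $B_r(g)$.
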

\begin{proof}
We have
\begin{align*}
B_r (\INVERT_s (g(x)))&= B_r (\frac{g(x)}{1-sxg(x)}\\
&=\frac{1}{1-rx} \frac{g(x)}{1-sxg(x)}\left(\frac{x}{1-rx}\right)\\
&=\frac{1}{1-rx} \frac{g\left(\frac{x}{1-rx}\right)}{1-s \frac{x}{1-rx} g\left(\frac{x}{1-rx}\right)}\\
&=\frac{\frac{1}{1-rx} g\left(\frac{x}{1-rx}\right)}{1-s \frac{x}{1-rx} g\left(\frac{x}{1-rx}\right)}\\
&=\INVERT_s (\frac{1}{1-rx} g\left(\frac{x}{1-rx}\right))\\
&=\INVERT_s (B_r (g(x)).\end{align*}
\end{proof}
We let $\Rev(f(x))=f^{\langle -1 \rangle}(x)$ denote the compositional inverse of $f(x)=\sum_{n=1}^{\infty} f_n x^n$ where $f_1 \ne 0$. For a generating function $g(x)=\sum_{n=0}^{\infty}g_n x^n$ with $g_0 \ne 0$ and $g_1 \ne 0$,  we define
$$\Rev(g)(x)=\frac{1}{x}\Rev(xg)(x).$$ Using Lagrange inversion \cite{LI}, we have, in these circumstances,
$$[x^n] \Rev(g) = \frac{1}{n+1} [x^n] \frac{1}{g(x)^{n+1}}.$$
\begin{proposition} We have
$$ \Rev(B_r(g(x))= \INVERT_{-r}(\Rev{g}(x)).$$
\end{proposition}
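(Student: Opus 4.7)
The plan is to reduce the identity to a composition of reversions and then evaluate everything explicitly. First I would set $h(x)=B_r(g)(x)=\frac{1}{1-rx}g\!\left(\frac{x}{1-rx}\right)$ and look at $xh(x)$, since the definition $\Rev(h)(x)=\frac{1}{x}\Rev(xh)(x)$ forces me to invert $xh$ rather than $h$. With $t=x/(1-rx)$ I have $\frac{x}{1-rx}=t$, so
\[
xh(x)=\frac{x}{1-rx}\,g\!\left(\frac{x}{1-rx}\right)=t\,g(t)=(xg)\!\left(\frac{x}{1-rx}\right).
\]
Thus $xh$ is the composition of the map $y\mapsto yg(y)$ with the map $x\mapsto x/(1-rx)$.

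Next I would invert this composition. Reversion reverses composition, so $\Rev(xh)=\Rev(x/(1-rx))\circ\Rev(xg)$. A direct computation shows $\Rev\!\left(\frac{x}{1-rx}\right)=\frac{x}{1+rx}$ (solving $y=x/(1-rx)$ for $x$). Writing $w=\Rev(xg)(x)$ I then get
\[
\Rev(xh)(x)=\frac{w}{1+rw}=\frac{\Rev(xg)(x)}{1+r\,\Rev(xg)(x)}.
\]

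Finally I would translate back using the convention $\Rev(g)(x)=\Rev(xg)(x)/x$, so $\Rev(xg)(x)=x\,\Rev(g)(x)$. Dividing the previous display by $x$ gives
\[
\Rev(h)(x)=\frac{\Rev(g)(x)}{1+rx\,\Rev(g)(x)},
\]
which is precisely $\INVERT_{-r}(\Rev(g))(x)$ by the definition of the $\INVERT_s$ transform with $s=-r$.

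The only substantive step is the compositional-inverse bookkeeping in the first two paragraphs (recognising $xh$ as a composition and inverting it cleanly); once that is done, the identification with $\INVERT_{-r}$ is immediate from matching denominators. I do not expect any analytic obstacle, since the hypotheses on $g$ (namely $g_0\ne 0$, $g_1\ne 0$) guarantee that $xg$, $xB_r(g)$ and the linear fractional map $x/(1-rx)$ all have well-defined formal compositional inverses.
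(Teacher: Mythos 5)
Your proof is correct and follows essentially the same route as the paper: both identify $xB_r(g)$ as the composition $(xg)\circ\frac{x}{1-rx}$, invert that composition to get $\frac{\Rev(xg)}{1+r\Rev(xg)}$, and then divide by $x$ to land on $\INVERT_{-r}(\Rev(g))$. The only cosmetic difference is that you invoke the anti-homomorphism property of reversion together with the explicit inverse $\frac{x}{1+rx}$, whereas the paper solves the functional equation $(xg)\bigl(\frac{u}{1-ru}\bigr)=x$ directly --- the same computation in different clothing.
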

\begin{proof}
We have
$$\Rev(B_r(g(x))=\frac{1}{x} \Rev\left(\frac{x}{1-rx}g\left(\frac{x}{1-rx}\right)\right)=\frac{1}{x}\Rev\left((xg)\left(\frac{x}{1-rx}\right)\right).$$
Now $\Rev\left((xg)\left(\frac{x}{1-rx}\right)\right)$ is the solution $u(x)$ of the equation
$$(xg)\left(\frac{u}{1-ru}\right)=x$$ for which $u(0)=0$.
We find that
$$\frac{u}{1-ru} = (xg)^{\langle -1 \rangle}(x)=\Rev(xg)(x).$$ Solving for $u(x)$, we find that
$$u(x)=\frac{\Rev(xg)}{1+r\Rev(xg)}.$$
Thus
\begin{align*}
\Rev(B_r(g(x))&=\frac{1}{x} \Rev\left(\frac{x}{1-rx}g\left(\frac{x}{1-rx}\right)\right)\\
&=\frac{1}{x}\frac{\Rev(xg)}{1+r\Rev(xg)}\\
&=\frac{1}{x} \frac{x \Rev(g)}{1+rx\Rev{g}}\\
&= \frac{\Rev{g}}{1+rx \Rev{g}}\\
&= \INVERT_{-r}(\Rev(g)(x)).\end{align*}
\end{proof}
We now consider an average of INVERT transforms, by looking at the expression
$$\frac{1}{2}\left(\INVERT_r + \INVERT_{-r}\right)(g(x)).$$ This is equal to
\begin{align*}\frac{1}{2}\left(\INVERT_r + \INVERT_{-r}\right)(g)(x)&=\frac{1}{2}\left(\frac{g(x)}{1-r x g(x)}+\frac{g(x)}{1+rxg(x)}\right)\\
&=\frac{1}{2}\left(\frac{(1+rxg(x))g(x)+(1-rxg(x))g(x)}{(1-rxg(x))(1+rxg(x))}\right)\\
&=\frac{1}{2}\left(\frac{2 g(x)}{1-r^2x^2 g(x)^2}\right)\\
&=\frac{g(x)}{1-r^2x^2g(x)^2}.\end{align*}
We now define the ``mean INVERT'' transform of $g(x)$ to be 
$$\MINVERT_r(g)(x)= \frac{g(x)}{1-r x^2 g(x)^2}.$$ 
Note that for this definition, we find it convenient to use $r$ in the denominator, so that 
$$\MINVERT_r(g)(x)=\frac{1}{2}\left(\INVERT_{\sqrt{r}} + \INVERT_{-\sqrt{r}}\right)(g(x)).$$
\begin{example} The generating function of the Catalan numbers $C_n=\frac{1}{n+1}\binom{2n}{n}$ \seqnum{A000108} is given by
$$c(x)= \frac{1-\sqrt{1-4x}}{2x}.$$
Then we have
$$\MINVERT_1(c)(x)=\frac{c(x)}{1-x^2 c(x)^2}.$$
This expands to give the sequence \seqnum{A000958}, which begins
$$1, 1, 3, 8, 24, 75, 243, 808, 2742, 9458, 33062,\ldots.$$ This sequence counts the number of ordered rooted trees with $n$ edges having a root of odd degree.
\end{example}
\begin{example} The Motzkin numbers \seqnum{A001006} have generating function 
$$m(x)=\frac{1-x-\sqrt{1-2x-3x^2}}{2x^2}.$$ Then 
$$\MINVERT_1(m)(x)=\frac{1}{1-xm(x)^2}$$ is the generating function of the central trinomial numbers \seqnum{A002426}. The generating function 
$$\INVERT\cdot\MINVERT_1(m)(x)=\frac{1}{1-2x-2x^2m(x)}$$ is that of \seqnum{A111961}, which is a transform of the Fibonacci numbers $F_{n+1}$ by the Motzkin matrix \seqnum{A064189}.
\end{example}
Sequences in this note, where documented, are referred to by their A$nnnnnn$ number in the On-Line Encyclopedia of Integer Sequences (OEIS) \cite{SL1, SL2}.

\section{Transforming simple moment sequences}
Among the simplest moment sequences are those whose generating functions can be expressed as a constant coefficient Jacobi continued fraction \cite{Wall} of the form
$$g_{a,b}(x)=\cfrac{1}{1-ax-\cfrac{bx^2}{1-ax-\cfrac{bx^2}{1-ax-\cdots}}}.$$ These are the moments of scaled Chebyshev polynomials of the second kind, defined as follows. 
$$P_n(x)=\sqrt{b}^n U_n\left(\frac{x-a}{2\sqrt{b}}\right).$$  In this case we have
$$g_{a,b}(x)=\Rev\left(\frac{1}{1+ax+bx^2}\right),$$ and if we have $g_{a,b}(x)=\sum_{n=0}^{\infty}\mu_n x^n$,
then 
$$\mu_n=\frac{1}{\pi} \int_{a-2\sqrt{b}}^{a+2\sqrt{b}} x^n \frac{\sqrt{4b-(x-a)^2}}{2b}\,dx.$$
\begin{proposition} Let $g(x)$ have continued fraction expression
$$\cfrac{1}{1-\alpha_1 x - \cfrac{\beta_1 x^2}{1-\alpha_2 x - \cfrac{\beta_2 x^2}{1-\alpha_3x-\cdots}}}.$$
Then the generating function $B_r(g)(x)$ will have the continued fraction expression
$$\cfrac{1}{1-(\alpha_1+r) x - \cfrac{\beta_1 x^2}{1-(\alpha_2+r) x - \cfrac{\beta_2 x^2}{1-(\alpha_3+r)x-\cdots}}}.$$
\end{proposition}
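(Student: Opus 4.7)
The plan is to exploit the explicit generating function identity $B_r(g)(x)=\frac{1}{1-rx}g\!\left(\frac{x}{1-rx}\right)$ and push the substitution $y=\frac{x}{1-rx}$ through the continued fraction one level at a time. The key observation is that, after substitution, the factor $\frac{1}{1-rx}$ out front combines with the top layer to shift exactly the $\alpha_1$ entry by $r$ while leaving $\beta_1$ intact, and what remains has the same shape as the original problem applied to the tail.

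Concretely, write the J-fraction of $g$ as
$$g(y)=\frac{1}{1-\alpha_1 y - \beta_1 y^2\, h(y)},$$
where $h(y)$ is itself the J-fraction with coefficients $(\alpha_2,\beta_2),(\alpha_3,\beta_3),\ldots$. Set $y=\frac{x}{1-rx}$, so that $(1-rx)y=x$ and $(1-rx)y^2=\frac{x^2}{1-rx}$. Then I would multiply the numerator and denominator of $\frac{1}{1-rx}\cdot g(y)$ by $(1-rx)$ to obtain
$$B_r(g)(x)=\frac{1}{(1-rx)-\alpha_1 x - \beta_1 \frac{x^2}{1-rx}\,h(y)}=\frac{1}{1-(\alpha_1+r)x - \beta_1 x^2\cdot\frac{h(y)}{1-rx}}.$$
The crucial point is that $\frac{h(y)}{1-rx}=\frac{1}{1-rx}h\!\left(\frac{x}{1-rx}\right)$ is precisely $B_r(h)(x)$.

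With that in hand, the proof finishes by induction on the depth. The induction hypothesis is that $B_r$ applied to any J-fraction with coefficients $(\alpha_1,\beta_1),(\alpha_2,\beta_2),\ldots$ truncated at depth $N$ produces the corresponding J-fraction with $\alpha_i$ replaced by $\alpha_i+r$; the base case is the computation above, and the inductive step applies the same identity to $B_r(h)(x)$ in the denominator. Since the J-fraction is understood as a formal power series (the $N$-th convergent agrees with $g$ through order $2N$, and the substitution $y=x/(1-rx)$ is well-defined on formal power series), passing to the limit gives the claimed J-fraction for $B_r(g)(x)$.

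The only mildly delicate point is the bookkeeping that turns $\frac{h(y)}{1-rx}$ into $B_r(h)(x)$ at each recursive step; once that identification is made, the rest is mechanical. No obstruction of substance arises, and no appeal to orthogonal polynomial theory is needed since the claim is purely about the algebraic form of the continued fraction.
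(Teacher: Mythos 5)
Your proof is correct. Note that the paper itself states this proposition without proof (it is a known fact, essentially contained in the author's earlier work \cite{TCF} on continued fractions and sequence transformations), so there is no printed argument to compare against. Your derivation --- clearing the prefactor $\frac{1}{1-rx}$ into the top level of the fraction via $(1-rx)y=x$ and $(1-rx)y^2=\frac{x^2}{1-rx}$, recognising the residual tail as $B_r(h)(x)$, and closing the recursion by induction on the depth of the convergents --- is sound, and the formal-power-series justification (the $N$-th convergent agrees with $g$ to order growing with $N$, and the substitution $x\mapsto x/(1-rx)$ preserves valuation) is exactly what is needed to pass to the full infinite fraction. Your argument is also in the same spirit as the paper's printed proof of the companion statement for $\INVERT_s$, which likewise manipulates the continued fraction directly at the top level; the difference is that $B_r$ touches every level, which is precisely why your recursive step identifying $\frac{1}{1-rx}h\bigl(\frac{x}{1-rx}\bigr)$ as $B_r(h)(x)$ is the right mechanism.
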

If we write $g(x)=\mathcal{J}(\alpha_1, \alpha_2,\ldots; \beta_1,\beta_2,\ldots)$ then we have
$$B_r(g)(x)=\mathcal{J}(\alpha_1+r, \alpha_2+r,\ldots; \beta_1,\beta_2,\ldots).$$
Thus the binomial transform only effects the coefficients of $x$ in the continued fraction, and it does this by a simple addition to each such coefficient.
\begin{proposition}
We let $g(x)=\mathcal{J}(\alpha_1, \alpha_2,\ldots; \beta_1,\beta_2,\ldots)$. Then we have
$\INVERT_s(g)(x)=\mathcal{J}(\alpha_1+s, \alpha_2,\ldots; \beta_1,\beta_2,\ldots)$.
\end{proposition}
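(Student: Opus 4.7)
The plan is to peel off the outermost layer of the Jacobi continued fraction and then carry out a one-line algebraic manipulation, so the argument will be short and almost entirely mechanical.

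More concretely, I would first write
$$g(x)=\cfrac{1}{1-\alpha_1 x-\beta_1 x^2\, \tilde g(x)},$$
where $\tilde g(x)=\mathcal{J}(\alpha_2,\alpha_3,\ldots;\beta_2,\beta_3,\ldots)$ denotes the tail of the continued fraction for $g$. By definition, the candidate target
$$h(x):=\mathcal{J}(\alpha_1+s,\alpha_2,\alpha_3,\ldots;\beta_1,\beta_2,\ldots)$$
has exactly the same tail $\tilde g(x)$, and so
$$h(x)=\cfrac{1}{1-(\alpha_1+s)x-\beta_1 x^2\,\tilde g(x)}.$$

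I would then observe that the two reciprocals differ only by $-sx$:
$$\frac{1}{h(x)}=\frac{1}{g(x)}-sx=\frac{1-sx\,g(x)}{g(x)}.$$
Inverting yields
$$h(x)=\frac{g(x)}{1-sx\,g(x)}=\INVERT_s(g)(x),$$
which is the claimed identity.

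The only thing that requires any comment is the legitimacy of treating $\tilde g(x)$ as a well-defined formal power series, but this is standard for constant-coefficient Jacobi continued fractions of this type (the truncations converge in the $(x)$-adic topology on $\mathbb{Q}[[x]]$), and it is exactly the same mechanism that was implicitly used in the preceding proposition about $B_r$. There is no real obstacle; the substance of the statement is simply the observation that $\INVERT_s$ subtracts $sx$ from the reciprocal, which is precisely the effect of adding $s$ to $\alpha_1$ in the outermost layer.
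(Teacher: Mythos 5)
Your proposal is correct and follows essentially the same route as the paper: both arguments rest on the single observation that $\INVERT_s$ replaces $1/g(x)$ by $1/g(x)-sx$, which alters only the coefficient $\alpha_1$ in the outermost layer of the continued fraction. The only cosmetic difference is that you name the tail $\tilde g(x)$ explicitly, while the paper leaves it written out as a continued fraction.
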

\begin{proof} We have 
$$g(x)=\cfrac{1}{1-\alpha_1 x - \cfrac{\beta_1 x^2}{1-\alpha_2 x - \cdots}}.$$ 
Then $$\frac{1}{g(x)}=1-\alpha_1 x - \cfrac{\beta_1 x^2}{1-\alpha_2 x - \cdots}.$$
Now we have 
\begin{align*}\frac{g(x)}{1- sxg(x)}&=\frac{1}{\frac{1}{g(x)}-sx}\\
&=\frac{1}{1-\alpha_1 x - \cfrac{\beta_1 x^2}{1-\alpha_2 x - \cdots}-sx}\\
&=\cfrac{1}{1-(\alpha_1 + s)x - \cfrac{\beta_1 x^2}{1-\alpha_2 x - \cdots}}.\end{align*}
\end{proof}
Thus the INVERT transform only effects the coefficient of the first occurrence of $x$ in the continued fraction.
\begin{proposition}
We let $g_{\alpha,\beta}(x)=\mathcal{J}(\alpha, \alpha,\ldots; \beta,\beta,\ldots)$ be a constant coefficient Jacobi continued fraction. Then we have
$\MINVERT_r(g)(x)=\mathcal{J}(\alpha_, \alpha_,\alpha,\ldots; \beta+r,\beta,\beta,\ldots)$.
\end{proposition}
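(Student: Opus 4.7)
The plan is to exploit the self-similarity of a constant-coefficient Jacobi continued fraction: since every level of the fraction looks the same, the tail from any depth equals $g$ itself, and $g$ satisfies a simple quadratic relation.

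First I would record the fixed-point identity. Because $g_{\alpha,\beta}(x)=\mathcal{J}(\alpha,\alpha,\ldots;\beta,\beta,\ldots)$ is constant coefficient, peeling off one level yields
$$g(x)=\cfrac{1}{1-\alpha x-\beta x^2 g(x)},$$
equivalently
$$\frac{1}{g(x)}=1-\alpha x-\beta x^2 g(x).$$
This is the key identity that will let me absorb the $\MINVERT_r$ perturbation back into a single continued-fraction coefficient.

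Next I would compute $1/h(x)$ directly from the definition $h(x)=\MINVERT_r(g)(x)=g(x)/(1-rx^2g(x)^2)$:
$$\frac{1}{h(x)}=\frac{1-rx^2 g(x)^2}{g(x)}=\frac{1}{g(x)}-rx^2 g(x).$$
Substituting the fixed-point identity above gives
$$\frac{1}{h(x)}=1-\alpha x-\beta x^2 g(x)-rx^2 g(x)=1-\alpha x-(\beta+r)x^2 g(x),$$
so
$$h(x)=\cfrac{1}{1-\alpha x-(\beta+r)x^2 g(x)}.$$
Finally, since $g(x)$ is itself the continued fraction $\mathcal{J}(\alpha,\alpha,\ldots;\beta,\beta,\ldots)$, replacing the factor $g(x)$ above by its fraction expansion exhibits $h(x)$ as $\mathcal{J}(\alpha,\alpha,\alpha,\ldots;\beta+r,\beta,\beta,\ldots)$, which is the claim.

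There is no real obstacle here; the only subtle point is noticing that the constant-coefficient hypothesis is essential, because it is what makes the tail of the continued fraction coincide with $g$ and hence allows $rx^2 g(x)$ to be combined with $\beta x^2 g(x)$ into $(\beta+r)x^2 g(x)$. Without that hypothesis, $\MINVERT_r$ would alter infinitely many levels rather than only $\beta_1$.
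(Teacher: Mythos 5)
Your proof is correct and follows essentially the same route as the paper: both arguments rest on the fixed-point identity $g=1/(1-\alpha x-\beta x^2 g)$ and reduce the $\MINVERT_r$ perturbation to the substitution $rx^2g^2/g = rx^2 g$, absorbing it into the first $\beta$ coefficient. Your presentation via $1/h = 1/g - rx^2 g$ is a slightly tidier arrangement of the same computation.
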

\begin{proof}
We have
$$g_{\alpha,\beta}(x)=\frac{1}{1-\alpha x - \beta x^2 g_{\alpha,\beta}(x)}.$$
Then (where we write $g(x)=g_{\alpha,\beta}(x)$)
\begin{align*}
\MINVERT_r(g)(x)&=\frac{g(x)}{1-rx^2g(x)^2}\\
&=\frac{1}{1-\alpha x - \beta x^2 g(x)}\cdot \frac{1}{1-r x^2 g(x)^2}\\
&=\frac{1}{1-\alpha x- \beta x^2 g(x) -rx^2 g(x)^2 (1-\alpha x - \beta x^2 g(x))}\\
&=\frac{1}{1-\alpha x - \beta x^2g(x)-r x^2g(x)^2/g(x)}\\
&=\frac{1}{1-\alpha x - \beta x^2 g(x)-rx^2 g(x)}\\
&=\frac{1}{1-\alpha x - (\beta+r) x^2 g(x)}.
\end{align*}
\end{proof}
\begin{example}
The generating function $g_{3,2}(x)=\mathcal{J}(3,3,3,\ldots;2,2,2,\ldots)$ is that of the (shifted) little Schroeder numbers $s_{n+1}$ \seqnum{A001003}. Then the generating function
$$\MINVERT_2(g_{3,2})(x)=\mathcal{J}(3,3,3,\ldots;4,2,2,\ldots)$$ is the generating function of the central Delannoy numbers $D_n$ \seqnum{A001850}. Note that we have
$$s_{n+1}=\frac{1}{\pi} \int_{3-2\sqrt{2}}^{3+2\sqrt{2}} x^n \frac{\sqrt{-x^2+6x-1}}{4}\,dx,$$
and
$$D_n=\frac{1}{\pi} \int_{3-2\sqrt{2}}^{3+2\sqrt{2}} \frac{x^n}{\sqrt{-x^2+6x-1}}\,dx.$$
\end{example}
\section{Spidernet graphs}
A spidernet graph is a generalization of a tree graph. Spidernet graphs are \emph{stratified}, starting from an \emph{origin point} $o$. A tree graph can be seen as a graph with no edges lying in a same stratum. We follow \cite{HO, Konno, Salimi} for our notation. Thus a graph is a pair $(V, E)$ where $V$ is a set of vertices and  $E \subset \{{\alpha,\beta}|\alpha, \beta \in V, \alpha \ne \beta\}$. Two vertices $\alpha$ and $\beta$ are adjacent if $\{\alpha, \beta\} \in E$ and we write $\alpha \sim \beta$. A finite sequence $\alpha_0, \alpha_1, \ldots, \alpha_n$ is said to be a walk of length $n$ if $\alpha_k \sim \alpha_{k+1}$ for $k=0,1,\ldots,n-1$. We define $\partial(\alpha,\beta)$ as the length of the shortest walk connecting $\alpha$ to $\beta$.  The degree or valency of a vertex $\alpha \in V$ is defined by
$$\kappa(\alpha)=|\{\beta \in V| \beta \sim \alpha\}|.$$
We can stratify $V$ by taking an element of $V$ as origin $o$ and taking
$$V= \bigcup_{k=0}^{\infty} V_k, \quad  \quad V_k=\{\alpha \in V | \partial(o,\alpha)=k\}.$$ For $\alpha \in V$, we set
$$\omega_{\epsilon}(\alpha)=|\{\beta \in V_{k+\epsilon}, \alpha \sim \beta\}|, \quad\quad \epsilon \in \{-1,0,1\}.$$ We also write $\{-,0,+\}$ for $\{-1,0,1\}$.  We consider integers $a, b, c$ such that $a \ge 1, b \ge 2$ and $1 \le c \le b-1$. A \emph{spidernet} $S(a,b,c)$ is a stratified graph $(V, E, o)$ which satisfies the following conditions:
$$\omega_+(o)=a, \omega_0(o)=0, \omega_{-}(o)=0,$$
$$\omega_+(\alpha)=c, \omega_-(\alpha)=1, \omega_0(\alpha)=b-1-c\quad\text{for } \alpha \ne o.$$
We then have $|V_0|=1$ and $|V_k|=ac^{k-1}$ for $k=1,2,3,\ldots$. The fundamental result in this context is that there exists a spectral measure characterized by a generating function
$$g_{a,b,c}(x)=\cfrac{1}{1-\cfrac{ax^2}{1-(b-1-c)x-\cfrac{cx^2}{1-(b-1-c)x-\cfrac{cx^2}{1-(b-1-c)x-\cdots}}}}.$$ Then
$$g_{a,b,c}(x)=\frac{2c}{a \sqrt{x^2(b^2-2b(c+1)+(c-1)^2)-2x(b-1-c)+1}+ax(b-1-c)-a+2c}.$$
\begin{center}
\begin{figure} \label{Fig}
\begin{center}
\includegraphics[height=60mm,width=140mm]{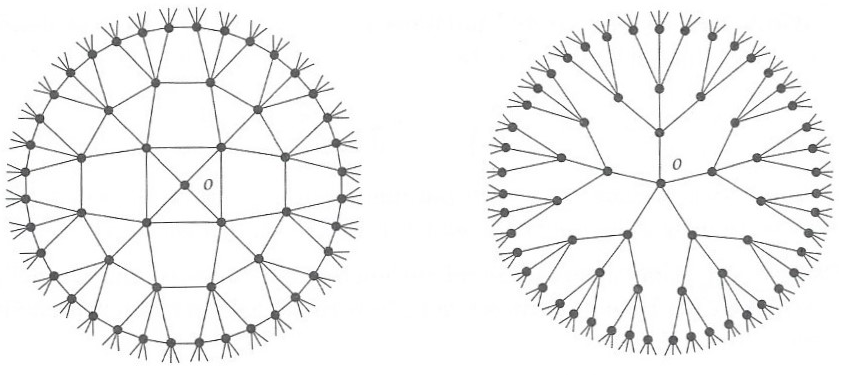}
\caption{Spidernet graphs of $S(4,6,3)$ and $S(5,4,3)$, reproduced from Hora \& Obata \cite{HO} }
\end{center}
\end{figure}
\end{center}
\begin{example} We consider the spidernet $S(4,6,3)$ (see Figure \ref{Fig}). Here, $b-1-c=2$ and hence we have
$$g_{4,6,3}(x)=\cfrac{1}{1-\cfrac{4x^2}{1-2x-\cfrac{3x^2}{1-2x-\cfrac{3x^2}{1-2x-\cdots}}}}.$$
This gives
$$g_{4,6,3}(x)=\frac{1+4x-2 \sqrt{1-4x+8x^2}}{16x^2+8x-1}.$$
We then find that
$$\mu_{4,6,3}(n)=\frac{2}{\pi}\int_{2-2\sqrt{3}}^{2+2\sqrt{3}} x^n \frac{\sqrt{4(x+2)-x^2}}{16+8x-x^2}\,dx+(\sqrt{2}-1)(4-4\sqrt{2})^n,$$
where $g_{4,6,3}(x)=\sum_{n=0}^{\infty} \mu_{4,6,3}(n)x^n$.
Thus we have an absolutely continuous component, and an atomic component coming from the denominator $16+8x-x^2$.
\end{example}
Following \cite{HO}, for three real number $p>0, q\ge 0, a \in \mathbb{R}$, a probability measure $\mu_{p,q,a}$ uniquely specified by 
$$\int_{-infty}^{+\infty} \frac{\mu_{p,q,a}(dx)}{z-x}=\frac{1}{z}\genfrac{}{}{0pt}{0}{}{-}\frac{p}{z-a}\genfrac{}{}{0pt}{0}{}{-}\frac{q}{z-a}\genfrac{}{}{0pt}{0}{}{-}\frac{q}{z-a}\genfrac{}{}{0pt}{0}{}{-\cdots},$$
is called the \emph{free Meixner law} with parameters $(p,q,a)$. The moments for this law will thus have a generating function given by $m_{p,q,a}(x)=\mathcal{J}(0,a,a,a,\ldots;p,q,q,q,\ldots)$. It follows that 
$$m_{p,q,a}(x)=-\frac{p \sqrt{(1-ax)^2-4qx^2}-apx+p-2a}{2(p^2x^2+apx-p+q)}.$$ This expands to give a sequence that begins 
$$1, 0, p, ap, p(a^2 + p + q), ap(a^2 + 2p + 3q), p(a^4 + 3a^2(p + 2q) + p^2 + 2pq + 2q^2),\ldots.$$ The absolutely continuous component of its density is given by
$$\frac{p \sqrt{4q-(a-x)^2}}{2(x^2(p-q)-apx-p^2)}.$$ Thus 
$$m_{p,q,a}(x)=\cfrac{1}{1-\cfrac{px^2}{1-ax-\cfrac{qx^2}{1-ax-\cfrac{qx^2}{1-ax-\cdots}}}}.$$
\begin{example} We consider the generating function  $g_{2,3}(x)$ where 
$$g_{2,3}(x)=\cfrac{1}{1-2x-\cfrac{3x^2}{1-2x-\cfrac{3x^2}{1-2x-\cdots}}}.$$ 
Applying $\MINVERT_1$ to $g_{2,3}(x)$, we obtain the generating function
$$\cfrac{1}{1-2x-\cfrac{4x^2}{1-2x-\cfrac{3x^2}{1-2x-\cdots}}}.$$ Next, we apply $\INVERT_{-2}$ to obtain the generating function 
$$m_{4,3,2}(x)=\cfrac{1}{1-2x-\cfrac{4x^2}{1-2x-\cfrac{3x^2}{1-2x-\cdots}}}.$$ 
Thus 
$$m_{4,3,2}(x)=\INVERT_{-2} \cdot \MINVERT_1(g_{2,3})(x).$$
\end{example}
In a similar way, we have more generally the following result.
\begin{proposition}
$$m_{p,q,a}(x)=\INVERT_{-a} \cdot \MINVERT_{p-q}(g_{a,q})(x).$$
\end{proposition}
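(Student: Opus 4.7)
My plan is to derive this as a direct two-step composition, reading off the effect of each transformation on the Jacobi continued fraction coefficients using the two propositions already proved in Section~2.

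First, I would start by rewriting the target and the source in Jacobi notation: the free Meixner moment generating function admits the Jacobi expansion
\[
m_{p,q,a}(x)=\mathcal{J}(0,a,a,a,\ldots;\,p,q,q,q,\ldots),
\]
already established just before the statement via its continued fraction form, while
\[
g_{a,q}(x)=\mathcal{J}(a,a,a,\ldots;\,q,q,q,\ldots)
\]
by definition. So the claim becomes the purely formal assertion that applying $\MINVERT_{p-q}$ followed by $\INVERT_{-a}$ to this constant-coefficient Jacobi fraction produces the $m_{p,q,a}$ Jacobi fraction.

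Next, I apply the $\MINVERT$ proposition to $g_{a,q}$. Since $g_{a,q}$ is a constant-coefficient Jacobi fraction with $\alpha=a$ and $\beta=q$, that proposition yields
\[
\MINVERT_{p-q}(g_{a,q})(x)=\mathcal{J}(a,a,a,\ldots;\,q+(p-q),q,q,\ldots)=\mathcal{J}(a,a,a,\ldots;\,p,q,q,\ldots).
\]
Then I apply the $\INVERT_s$ proposition with $s=-a$ to this fraction. That proposition only alters the leading $\alpha_1$, replacing it by $\alpha_1+s=a+(-a)=0$, while leaving all other $\alpha_i$ and all $\beta_i$ untouched, giving
\[
\INVERT_{-a}\!\bigl(\mathcal{J}(a,a,a,\ldots;\,p,q,q,\ldots)\bigr)=\mathcal{J}(0,a,a,\ldots;\,p,q,q,\ldots)=m_{p,q,a}(x).
\]

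There really isn't a hard part here: once the two earlier structural propositions are accepted, the result is obtained by matching continued fraction parameters. The only point that merits care is the bookkeeping of which coefficient each transform touches—$\MINVERT_r$ shifts only the first $\beta$ (and requires the input to be constant-coefficient on both diagonals, which $g_{a,q}$ satisfies), and $\INVERT_s$ shifts only the first $\alpha$—so the order $\INVERT_{-a}\cdot\MINVERT_{p-q}$ is essential: doing $\INVERT_{-a}$ first would destroy the constant-$\alpha$ hypothesis needed to invoke the $\MINVERT$ proposition. That ordering constraint, rather than any computation, is the substantive content of the statement.
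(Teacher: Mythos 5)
Your proof is correct and follows exactly the route the paper intends: the paper offers no explicit proof of this proposition, merely the worked instance $m_{4,3,2}=\INVERT_{-2}\cdot\MINVERT_1(g_{2,3})$ followed by ``in a similar way, we have more generally,'' and your argument is precisely the general version of that computation, chaining the $\MINVERT_r$ and $\INVERT_s$ propositions to turn $\mathcal{J}(a,a,\ldots;q,q,\ldots)$ into $\mathcal{J}(0,a,a,\ldots;p,q,q,\ldots)$. Your closing observation about the order of the two transforms (that $\MINVERT$ must be applied first because its proposition is stated only for constant-coefficient Jacobi fractions) is a worthwhile point of care that the paper leaves implicit.
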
 
\begin{example} The Riordan numbers \seqnum{A005043} have their generating function given by 
$$r(x)=\frac{m(x)}{1+x m(x)}=\INVERT_{-1}(m)(x).$$ 
We can express their generating function as $\mathcal{J}(0,1,1,1,\ldots;1,1,1,\ldots)$. Thus they are the moments for the density associated to the spidernet $S(1,3,1)$. Their moment representation is given by 
$$R_n=\frac{1}{2\pi} \int_{-1}^3 x^n \frac{\sqrt{-x^2+2x+3}}{1+x}\,dx.$$ 
\end{example}

\begin{example} We have seen that the $\MINVERT_1$ transform of the Motzkin numbers yields the central trinomial numbers. If we now apply an $\INVERT_{-1}$ transform we obtain the sequence \seqnum{A109190}, with generating function $$\INVERT_{-1}\cdot \MINVERT_1 (m)(x)=\frac{1}{1-2x^2 m(x)}=\mathcal{J}(0,1,1,1,\ldots;2,1,1,\ldots).$$ This sequence is thus the moment sequence for the spidernet $S(2,3,1)$. It gives the number of $(1,0)$-steps at level zero in all grand Motzkin paths of length $n$. The moment representation of this sequence is given by 
$$\frac{1}{\pi}\int_{-1}^3 x^n \frac{\sqrt{-x^2+2x+3}}{4+2x-x^2}\,dx+\frac{1}{\sqrt{5}}(1-\sqrt{5})^n.$$ 
\end{example}

\section{A note on Hankel transforms}
For a sequence $(a_n)_{n \ge 0}$ whose generating function can be expressed as a Jacobi continued fraction 
$\mathcal{J}(\alpha_1, \alpha_2, \ldots; \beta_1, \beta_2,\ldots)$ the Hankel transform $h_n$ of $a_n$, defined as the sequence of Hankel determinants $h_n=|a_{i+j}|_{0 \le i,j \le n}$ \cite{Kratt, Kratt1, Layman}, is given by
$$h_n = \prod_{k=1}^n \beta_k^{n-k}.$$
In particular, the moment sequence which is the expansion of $g_{\alpha, \beta}(x)$ will have its Hankel transform given by $\beta^{\binom{n+1}{2}}$ while the moment sequence which is the expansion of $m_{p,q,a}(x)$ will have its Hankel transform given by $p^n q^{\binom{n}{2}}$.
\begin{example} The moment sequence for the $S(4,6,3)$ spidernet has generating function
$$\mathcal{J}(0,2,2,2,\ldots; 4,3,3,3,\ldots.$$  This sequence begins
$$1, 0, 4, 8, 44, 168, 776, 3472, 16204,\ldots.$$ Its Hankel transform begins
$$1, 4, 48, 1728, 186624, 60466176, 58773123072, 171382426877952,\ldots$$ whose general term is given by $4^n3^{\binom{n}{2}}$.
\end{example}
\begin{example} We have seen that the $\MINVERT_1$ transform of the Catalan numbers is the sequence
$$1, 1, 3, 8, 24, 75, 243, 808, 2742, 9458, 33062,\ldots$$ with generating function $\frac{c(x)}{1-x^2c(x)^2}$. This is the convolution of the Catalan numbers $C_n$ and the Fine numbers $\mathcal{F}_n$ \seqnum{A000957} \cite{Fine}.
The Fine numbers have generating function given by the continued fraction
$$\cfrac{1}{1-\cfrac{x^2}{1-2x-\cfrac{x^2}{1-2x-\cfrac{x^2}{1-2x-\cdots}}}}.$$ Note that this is $m_{1,1,2}(x)$, which corresponds to the $S(1,4,1)$ spidernet. The Hankel transform of the convolution is $F_{2n+1}$ \seqnum{A001519}$(n+1)$.

We note that the generating function of the Fine numbers can be expressed as 
$$\frac{2}{1+2x+\sqrt{1-4x}}=\Rev\left(\frac{1-2x}{(1-x)^2}\right),$$ and we have the moment representation 
$$\mathcal{F}_n=\frac{1}{2\pi}\int_0^4 x^n \frac{\sqrt{x(4-x)}}{1+2x}\,dx+\frac{3}{4}\left(-\frac{1}{2}\right)^n.$$
\end{example}

\section{Series inversion}
The reversion of $g_{\alpha, \beta}(x)$ is the rational generating function $\frac{1}{1+\alpha x + \beta x}$. Recall that this follows since the series reversion (compositional inverse) of $xg_{\alpha, \beta}(x)$ is given by $\frac{x}{1+\alpha x + \beta x}$. Similarly the reversion of 
$$m_{p,q,a}(x)=-\frac{p \sqrt{(1-ax)^2-4qx^2}-apx+p-2a}{2(p^2x^2+apx-p+q)}$$ is given by 
$$\Rev(m_{p,q,a})(x)=\frac{p \sqrt{(1-ax)^2+4(p-q)x^2}-apx-p+2q}{2(p^2x^2-apx+q)}.$$ 
This is given by $\frac{1}{x}$ times the solution to the equation 
$$\frac{up \sqrt{(1-au)^2+4(p-q)u^2}-apu-p+2q}{2(p^2u^2-apu+q)}=x.$$ 
This expands to give a sequence that begins 
$$1, 0, -p, - ap, - p(a^2 - 2p + q), - ap(a^2 - 5p + 3q), - p(a^4 + 3a^2(2q - 3p) + 5p^2 - 6pq + 2q^2),\ldots.$$
We have the following result.
\begin{proposition}
$$\Rev(m_{p,q,a})(x)=m_{-p, q-p,a}(x).$$ 
\end{proposition}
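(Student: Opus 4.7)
The plan is to show that $\Rev(m_{p,q,a})(x)$ and $m_{-p,q-p,a}(x)$ satisfy the \emph{same} quadratic equation in $y$ over $\mathbb{Z}[p,q,a,x]$, and then invoke uniqueness of the formal power-series solution with constant term $1$. No use is made of the explicit square-root formula; everything flows from the continued fraction structure $m_{p,q,a}(x) = \mathcal{J}(0,a,a,\ldots;p,q,q,\ldots)$.

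First I would derive an algebraic relation for $m := m_{p,q,a}(x)$. Writing $m = 1 / \bigl(1 - p x^2 G(x)\bigr)$ with $G := g_{a,q}(x) = \mathcal{J}(a,a,\ldots;q,q,\ldots)$, the series $G$ satisfies its characteristic quadratic $q x^2 G^2 - (1-ax) G + 1 = 0$. Solving the first relation for $G = (m-1)/(p x^2 m)$, substituting into the quadratic for $G$, and multiplying through by $p^2 x^2 m^2$ to clear denominators should yield
\[
q(m-1)^2 - p(1-ax)(m-1) m + p^2 x^2 m^2 = 0. \qquad (\ast)
\]

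Next I would convert reversion into a functional equation. If $y(x) = \Rev(m_{p,q,a})(x)$, then by definition $x y(x) = (x m_{p,q,a})^{\langle -1\rangle}(x)$, so $m_{p,q,a}(xy) = 1/y$. Substituting $m \mapsto 1/y$ and $x \mapsto xy$ in $(\ast)$ and cancelling an overall $1/y^2$ gives
\[
q(1-y)^2 - p(1 - axy)(1-y) + p^2 x^2 y^2 = 0. \qquad (\dagger)
\]
On the other hand, the analogue of $(\ast)$ for $y = m_{-p,q-p,a}(x)$ is obtained by simply substituting $p \mapsto -p$, $q \mapsto q-p$ in $(\ast)$:
\[
(q-p)(1-y)^2 - p(1-ax)(1-y) y + p^2 x^2 y^2 = 0. \qquad (\ddagger)
\]
A routine expansion then produces $(\dagger) - (\ddagger) = p(1-y)\bigl[(1-y) - (1-axy) + (1-ax) y\bigr]$, and the bracket collapses to $0$, so $(\dagger)$ and $(\ddagger)$ are literally the same quadratic relation for $y$.

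Finally, setting $x = 0$ in $(\dagger)$ gives $(1-y)\bigl(q(1-y) - p\bigr) = 0$, whose roots $y = 1$ and $y = 1 - p/q$ are distinct for generic $p,q$. Since $m_{p,q,a}(0) = 1$, both $\Rev(m_{p,q,a})(x)$ and $m_{-p,q-p,a}(x)$ are formal power series with $y(0) = 1$, and the implicit function theorem then forces the two series to agree; the degenerate parameter values can be dispatched by a continuity argument in $p$ and $q$. The main obstacle I foresee is the bookkeeping in the first step — the clean elimination of $G$ to produce $(\ast)$ — after which the reversion substitution yielding $(\dagger)$ and the one-line cancellation $(\dagger) - (\ddagger) = 0$ are entirely mechanical.
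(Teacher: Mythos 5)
Your proof is correct, and it takes a genuinely different route from the paper's. The paper works with explicit closed forms: it asserts the continued-fraction shape $\mathcal{J}(0,a,a,\ldots;-p,q-p,q-p,\ldots)$ for the reversion, evaluates it via $u=1/(1+px^2 g_{a,q-p})$, and checks that the resulting radical expression coincides with the formula for $\Rev(m_{p,q,a})$ obtained earlier by solving the reversion equation directly --- so the verification ultimately amounts to simplifying and matching two expressions involving $\sqrt{(1-ax)^2+4(p-q)x^2}$. You instead never leave the polynomial world: you extract the quadratic $(\ast)$ satisfied by $m_{p,q,a}$ from the continued-fraction recursion, translate reversion into the substitution $m\mapsto 1/y$, $x\mapsto xy$, and observe that the resulting quadratic $(\dagger)$ is \emph{identically} equal to the quadratic $(\ddagger)$ for $m_{-p,q-p,a}$ (your factorization $(\dagger)-(\ddagger)=p(1-y)\bigl[(1-y)-(1-axy)+(1-ax)y\bigr]=0$ checks out), then conclude by uniqueness of the power-series root with constant term $1$, which follows since $2q\cdot 1+(p-2q)=p$ is a unit for generic $p$. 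Your approach buys freedom from branch-of-square-root bookkeeping and makes the parameter substitution $(p,q)\mapsto(-p,q-p)$ transparent; the paper's approach buys the explicit continued-fraction interpretation of the reversion as a free-Meixner-type generating function, which is the structural point the surrounding section cares about. The only things worth tightening in a written version are the uniqueness step (state it as the Hensel-type cancellation $F(y_1)-F(y_2)=(y_1-y_2)\bigl(A(y_1+y_2)+B\bigr)$ with $A(0)\cdot 2+B(0)=p\neq 0$, rather than appealing to the implicit function theorem) and the degenerate cases, which are cleanest dispatched by noting that all coefficients on both sides are polynomials in $p,q,a$, so generic agreement implies identity.
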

Note that we have relaxed the conditions on $p, q$, and $a$. 
\begin{proof}
Our claim is that the reversion $u(x)$ can be expressed as 
$$\frac{u(x)}{x}=\cfrac{1}{1+\cfrac{px^2}{1-ax-\cfrac{(q-p)x^2}{1-ax - \cfrac{(q-p)x^2}{1-ax-\cdots}}}}.$$ 
We solve for $v(x)$ where
$$v(x)=\frac{1}{1-ax-(q-p)x^2 v(x)}=g_{a,(q-p)}(x),$$ and then 
$$u(x)=\frac{1}{1+px^2 g_{a,(q-p)}(x)}.$$
Simplifying shows that 
$$u(x)=\frac{p \sqrt{(1-ax)^2+4(p-q)x^2}-apx-p+2q}{2(p^2x^2-apx+q)}$$ as required.
\end{proof}

\section{Conclusions} Spidernet graphs give rise to interesting Jacobi continued fractions, associated to the free Meixner probability density. Using standard sequence transforms, and the novel ``mean INVERT'' transform, we link moment sequences related to spidernet graphs to other classical moment sequences, such as the Catalan numbers, the Delannoy numbers, and the Fine numbers.

\bigskip
\hrule
\bigskip
\noindent 2020 {\it Mathematics Subject Classification}: Primary
44A60; Secondary 05C10 05C30, 05C63, 05A15, 11B83.
\noindent \emph{Keywords:} Moment sequence, generating function, continued fraction, spidernet graph, integer sequence.

\bigskip
\hrule
\bigskip
\noindent (Concerned with sequences
\seqnum{A000108},
\seqnum{A000957},
\seqnum{A000958},
\seqnum{A001003},
\seqnum{A001006},
\seqnum{A001519},
\seqnum{A001850}
\seqnum{A002426}, 
\seqnum{A005043},
\seqnum{A109190}, and
\seqnum{A111961}).

\end{document}